\newcolumntype{2}{D{.}{}{2.0}}
\newtheorem{theorem}{Theorem}[section]{\bfseries}{\itshape}
\newtheorem*{theorem*}{Theorem}{\bfseries}{\itshape}
{\bfseries}{\itshape}
\newtheorem{Lem}[theorem]{Lemma}{\bfseries}{\itshape}
{\bfseries}{\itshape}
{\bfseries}{\itshape}
{\bfseries}{\itshape}
{\bfseries}{\itshape}
\theoremstyle{remark}
\newtheorem{remark}[theorem]{Remark}{\bfseries}{\itshape}
\newtheorem{example}[theorem]{Example}{\bfseries}{\itshape}
\DeclareMathOperator{\lcm}{lcm}
\renewcommand{\email}[2][]{%
  \ifx\emails\@empty\relax\else{\g@addto@macro\emails{,\space}}\fi%
  \@ifnotempty{#1}{\g@addto@macro\emails{\textrm{(#1)}\space}}%
  \g@addto@macro\emails{#2}%
}
\newcommand{\algMn}{1}		
\newcommand{\algdeltan}{2}	
\newcommand{\algdeltanT}{3}	
\newcommand{\multipliers}{4}	
\newcommand{\fastalg}{5}	
\newcommand{\bachmod}{6}	
\newcommand{\E}{{\mathbb E}}	
\newcommand{\V}{{\mathbb V}}	
\newtheorem*{remark*}{Remark}
\title{Algorithms for the Multiplication Table Problem}
\author{ Richard Brent}
\address[Brent]{ Australian National University, Canberra, ACT 2600, Australia}
\author{ Carl Pomerance}
\address[Pomerance]{ Mathematics Department, Dartmouth College, Hanover, NH 03755}
\author{ David Purdum}
\author{ Jonathan Webster}
\address[Purdum, Webster]{Butler University, Indianapolis, IN 46208}
\subjclass[2000]{11A25, 11N37, 11Y16, 11Y70, 65C05, 68Q25}
\begin{document}

\begin{abstract}
Let $M(n)$ denote the number of distinct entries in the $n \times n$
multi\-plication table. The function $M(n)$ has been studied by
Erd\H{o}s, Tenenbaum, Ford, and others,
but the asymptotic behaviour of $M(n)$ as $n \to \infty$
is not known precisely. Thus, there is some interest in algorithms for
computing $M(n)$ either exactly or approximately.
We compare several algorithms for computing
$M(n)$ exactly, and give a new algorithm that has a subquadratic running 
time. We also present two Monte Carlo algorithms 
for approximate computation of $M(n)$.
We give the results of exact computations
for values of $n$ up to $2^{30}$, 
and of Monte Carlo computations for $n$
up to  $2^{100,000,000}$,
and compare our experimental results with Ford's
order-of-magnitude result.
\end{abstract}

\maketitle

\section{Introduction}

Although a multiplication table is understood by a
typical student in elementary school, there remains much that we do not know
about such tables.
In 1955, Erd\H{o}s studied the problem of counting the number $M(n)$ 
of distinct
products in an $n\times n$ multiplication table.  
That is, $M(n) := |\{ i j: 1 \leq i,j \leq n \}|$.  
In~\cite{erdos55}, Erd\H{o}s showed that $M(n) = o(n^2)$.  Five years later,
in~\cite{erdos60}, he obtained
\begin{equation}
M(n) = \frac{n^2}{(\log n)^{c + o(1)} } \mbox{ as } n \rightarrow \infty,
	\label{eq:Erdos-improved}
\end{equation}
where (here and below) 
$c  =  1 - (1 + \log \log 2)/\log 2 \approx 0.086071$.  
In 2008, Ford 
\cite{ford08,ford08_simpler} gave the correct order of magnitude
\begin{equation}
M(n) =\Theta(n^2/\Phi(n)),		\label{eq:FordM}
\end{equation}
where
\begin{equation}
\Phi(n) := (\log n)^c (\log\log n)^{3/2}	\label{eq:Phi}
\end{equation}
is a slowly-growing function.\footnote{In~\eqref{eq:FordM}, the notation
$f =\Theta(g)$ means $f =O( g)$ {and} $g =O( f)$,
i.e.\ there are positive constants $A$ and $B$ such that
$Ag(n) \le f(n) \le Bg(n)$ for all sufficiently large~$n$.}

Note that \eqref{eq:FordM} is not a true asymptotic formula, 
as $M(n)/(n^2/\Phi(n))$ might or might not tend to a limit as
$n\to\infty$.  The computation of
$M(n)$ for various large $n$ could suggest (though not prove)
the true behaviour of $M(n)$ as $n \to \infty$.
The history of such computations goes back to Brent and Kung \cite{b_k},
who considered how much area and time are needed to perform an $n$-bit binary
multiplication on a VLSI chip.
For this, they needed a lower bound on $M(2^n-1)$.
In 1981 they computed%
\footnote{ Brent and Kung actually computed $M(2^n-1)+1 =|\{ i j: 0 \leq
i,j < 2^n \}|.$ For consistency in the exposition, we translate their
results to the definition of $M(n)$ stated above.} 
\hbox{$M(2^n -1)$} for $1 \leq n \leq 17$. 
In unpublished work dating from 2012, 
the first two authors revisited the problem, extending the
computation through $n=25$, and exploring Monte Carlo estimates
for larger $n$.
Some years later, the fourth author discovered several new algorithms for
the exact computation of $M(n)$. In this paper we present both exact
and Monte Carlo algorithms.

It is  useful to distinguish between an
algorithm for \emph{evaluating} $M(n)$ at one given integer $n$, and an 
algorithm for \emph{tabulating} $M(k)$ for all integers $k$ in the
interval $[1,n]$, which we may refer to simply as ``tabulating $M(n)$''.
Several of the exact evaluation algorithms can be modified
in an obvious way to give a tabulation algorithm with essentially the same
time and space complexity.\footnote{Space is measured in 
bits, and does not include any space required to store the output.}
This is not true of the Monte Carlo algorithms,
which output an estimate of a single value $M(n)$ and give no information
on $M(k)$ for $k \ne n$.

Regarding exact algorithms,
our contributions are an extension of the previous numerical work on
evaluation/tabulation of $M(n)$,
and the development of an asymptotically faster (subquadratic) tabulation
algorithm.
Specifically, we can evaluate $M(k)$ for all $k \le n$ in time
$O (n^2/L^{1/\sqrt{2} + o(1)} ),$ 
where  
\[L = L(n) := \exp{\left( \sqrt{ \log n \log \log n }\right) },\]
using $O(n)$ space. We present this algorithm
(Algorithm~$\fastalg$) via a series of steps.   
First, we explain a straightforward algorithm (Algorithm~$\algMn$)
to evaluate $M(n)$ in time and space $O(n^2)$.
This algorithm\footnote{The space requirement
can be reduced by segmentation, see \S\ref{subsec:sieve}.}
was used by Brent and Kung~\cite{b_k}.
Second, we show (Algorithm~$\algdeltan$) that 
we can evaluate $M(n)$ given $M(n-1)$, 
using only $O(n)$ space and $O(n\log n)$ time.\footnote{The
time bound can be improved, see
Theorems~\ref{thm:Alg2bounds}--\ref{thm:fastalg}
in \S\ref{subsec:differences}.}
This incremental approach 
naturally leads to a tabulation algorithm
(Algorithm~$\algdeltanT$) that uses $O(n)$ space
and $O(n^2\log n)$ time.
Finally, we refine the incremental approach to obtain a subquadratic
tabulation algorithm (Algorithm~$\fastalg$): see Theorem~\ref{fast_comp}
for the time bound.

For arguments $n \le 2^{30}$, our implementation
of Algorithm~$\fastalg$
is slower than that of the best quadratic algorithm. This is a
familiar phenomenon: for many other problems (e.g. multiplication of
$n$-bit integers, or of $n \times n$ matrices) the asymptotically fastest
known algorithm is not necessarily the fastest in practice.

We have tabulated $M(n)$ for $n \le 2^{30}$, using
a variant of Algorithm~$\algdeltanT$.
For confirmation of the numerical results we used
Algorithm~$\algMn$ (with segmentation and parallelisation)
for various values of $n$,  
including $n = 2^{k}-1$ for $k = 1, 2,\ldots, 30$.

The known exact (quadratic or subquadratic) evaluation/tabulation
algorithms are too slow to go much past $n = 2^{30}$,
so for larger $n$
it is necessary to resort to approximate (Monte Carlo) methods.
We give two Monte Carlo algorithms,
which we call \emph{Bernoulli} and \emph{product},
for reasons that will be evident later (see \S\ref{approx_eval}).
In each case, we avoid the problem of factoring large integers by using
Bach's algorithm~\cite{bach_factor,kalai_factor} 
for generating random integers in factored form. 
As far as we are aware, this project represents the first time that the Bach algorithm
for producing random factored numbers has been usefully implemented.
The speed of the Monte Carlo algorithms
depends mainly on the time required for testing the primality%
\footnote{Or ``probable'' primality, 
since a small probability of error is acceptable in the context of
a Monte Carlo computation, see \S\ref{subsec:Bach}.}
of large integers, which can be done much faster than factoring integers
of comparable size \cite{AKS,Rabin80}.

The paper is organized as follows.
Section $\ref{exact_eval}$ is concerned with exact algorithms for
evaluating/tabulating $M(n)$.
After a brief overview of the sieve of Eratosthenes as a precursor to
various ways of evaluating $M(n)$, we start with the method used 
by Brent and Kung~\cite{b_k}.
We then show how to tabulate $M(n)$ 
in time $O(n^2\log n)$ using an incremental algorithm.
In fact, the time bound can be reduced slightly by using a result
of Ford~\cite{ford08}, as we show in 
Theorems~\ref{thm:Alg2bounds}--\ref{thm:fastalg} and Remark~\ref{rem:Mnlogn}.  
In \S\ref{modcomp} we consider generating products in a multiplication
in specific residue classes.
We then show (in Theorem~\ref{fast_comp}) that the incremental algorithm 
can be  modified to tabulate $M(n)$ in time $O (n^2/L^{1/\sqrt{2} + o(1)} )$.
We remark that $\log n = L^{o(1)}$, so $\log n$ factors can be subsumed into 
the $o(1)$ term in the exponent of~$L$.

Section \ref{approx_eval} describes and compares 
our two Monte Carlo algorithms for 
estimating $M(n)$. We consider the variance
in their estimates for a given number of independent random trials.
Lemma~\ref{lemma:compare_variance} shows that, considering only the
variance, the product algorithm is more accurate than the Bernoulli 
algorithm. This does not necessarily mean that it is preferable in
practice, as factors such as the time per trial and space requirements
need to be considered.

Finally, Section~\ref{sec:numerical}
contains numerical results (exact up to $n = 2^{30}-1$,
and approx\-imate up to $n = 2^{100,000,000}$), 
comments on implementations of the
algorithms, and some conclusions regarding the asymptotic behaviour
of~$M(n)$.

\begin{remark}
In~\eqref{eq:Phi}, the factor $(\log n)^c$ is asymptotically
larger than the factor $(\log\log n)^{3/2}$. However, for small~$n$, the
second factor varies more rapidly than the first. 
Write $x := \log n$, $A = A(x) := x^c$, $B = B(x) := (\log x)^{3/2}$. 
Thus $\Phi(n) = AB$ and, taking logarithmic derivatives,
we have $\Phi'/\Phi = A'/A + B'/B$.
Now $|A'/A| < |B'/B|$ if $c/x < 3/(2x\log x)$, 
i.e.\ if $x < \exp(3/2c) \approx 3.7\times 10^7$, or
\[
n < \exp(\exp(3/2c)) \approx 2^{53,431,908}.
\]
Consequently, our numerical results
up to $n = 2^{100,000,000}$ barely extend to the region where the
true asymptotic behaviour of $M(n)$ becomes evident.
\end{remark}

\section{Exact Evaluation of $M(n)$}	\label{exact_eval}

Our model of computation is a standard random access machine with infinite,
direct-access memory.  Memory may be addressed at the bit-level or at the
word level, and the word size is $\Theta( \log n)$ bits if $n$ is the input
size.  We assume that arithmetic operations, memory access, and other basic
operations take unit time.  We count space in bits and do not include
any space used to store the output.

\subsection{Sieve of Eratosthenes}	\label{subsec:sieve}

The algorithms for 
evaluating $M(n)$ resemble the sieve of
Eratosthenes, 
the  simplest implementation of which involves,
for each $1<k\le n^{1/2}$, removing
the multiples of $k$ from $(k,n]$.  
This naive implementation uses $O(n \log n)$ time and $O(n)$
space and finds all primes up to~$n$.  There is a large body of
literature, both practical and theoretical, dealing with improvements and
variations to this sieve.
We refer to Helfgott \cite{helfgott17} for a
summary of the literature. Here, we highlight the aspects that are relevant 
for computing $M(n)$.
In practice, we may be limited by a space constraint; lowering the space
used by an algorithm 
may turn otherwise infeasible computations into feasible ones. 
For example, 
before marking off multiples of $k$ in $(k,n]$, we may segment this interval
into subintervals.   The asymptotic run-time remains unchanged so long as the
``marking off" process is not doing ``empty work'',
i.e.\ so long as $(n-k)/k$ is not small. Using this idea, the space
bound may be reduced to $O(n^{1/2})$ with straightforward segmentation 
of the interval $[1,n]$. Helfgott~\cite{helfgott17} reduces the space
requirement further by using Diophantine approximation to predict
which integers less than $n^{1/2}$ have multiples in a given subinterval.  
This prediction process allows
sieving on intervals of size $O(n^{1/3}(\log n)^{5/3})$ 
at no asymptotic cost in time \cite[Main Theorem]{helfgott17}.

\subsection{Computing $M(n)$ Directly}		\label{subsec:directly}

We can explicitly construct each product in a multiplication table and count
the number of distinct products using Algorithm~$\algMn$.  The algorithm
exploits the symmetry of the multiplication table.

\SetAlgoRefName{\algMn}
\begin{algorithm}
\caption{Computing $M(n)$ directly}
\SetKwInOut{Input}{Input} \SetKwInOut{Output}{Output}

\Input{An integer $n$}
\Output{$M(n)$}
Initialize a bit vector $A$ of length $n^2$ to $0$.\\ 
\For{ $ 1 \leq  i \leq n $ }{
  \For{ $i \leq j \leq n $ }{
  Set $A[ij] = 1$
  }
 } 
 \Return Hamming weight of $A$
\end{algorithm}

The following lemma is obvious from counting the number of times the body
of the inner loop is executed .  We note that the area associated with
the table is $n^2$.
\begin{Lem}			\label{lem:naive}
Algorithm $\algMn$ computes $M(n)$ in time $O(n^2)$ and space $O(n^2)$.
\end{Lem}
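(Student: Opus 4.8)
The plan is to verify both the time and space bounds directly from the structure of Algorithm~$\algMn$, since the statement is an immediate consequence of counting elementary operations. The key observation is that the algorithm performs exactly one unit-time operation (setting a bit) for each pair $(i,j)$ satisfying $1 \le i \le j \le n$, plus the cost of initializing and finally scanning the bit vector $A$.

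First I would bound the total number of executions of the inner loop body. For each fixed $i$ with $1 \le i \le n$, the inner index $j$ ranges over $i \le j \le n$, so the body runs $n - i + 1$ times. Summing over $i$ gives
\[
\sum_{i=1}^{n} (n - i + 1) = \sum_{k=1}^{n} k = \frac{n(n+1)}{2} = O(n^2)
\]
executions, each costing $O(1)$ under our model of computation (computing the index $ij$ and writing to $A[ij]$ are unit-time operations). Next I would account for the two remaining phases: initializing the length-$n^2$ bit vector $A$ to zero takes $O(n^2)$ time, and computing the final Hamming weight requires a single scan of $A$, also $O(n^2)$ time. Adding these three contributions yields a total running time of $O(n^2)$.

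For the space bound, I would simply note that the dominant storage requirement is the bit vector $A$, which has length $n^2$ and hence occupies $O(n^2)$ bits; the loop counters $i$ and $j$ and the product $ij$ each fit in $O(\log n)$ bits, which is negligible by comparison. This establishes the $O(n^2)$ space bound and completes the proof.

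I do not anticipate any real obstacle here, since the result is described in the excerpt as ``obvious from counting the number of times the body of the inner loop is executed.'' The only point requiring mild care is ensuring that the initialization and final Hamming-weight computation do not dominate; since each is already $\Theta(n^2)$, they match (rather than exceed) the loop cost, so the overall bound remains $O(n^2)$ in both time and space.
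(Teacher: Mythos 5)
Your proof is correct and takes the same approach as the paper, which simply observes that the lemma is ``obvious from counting the number of times the body of the inner loop is executed.'' Your version just makes that count explicit, $\sum_{i=1}^{n}(n-i+1) = n(n+1)/2$, and adds the routine bookkeeping for initialization, the final Hamming-weight scan, and the bit-vector storage, all of which match the paper's implicit reasoning.
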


Algorithm $\algMn$ looks similar to the sieve of Eratosthenes 
(for finding the primes smaller than $n^2$), and
many of the tricks that are known to speed up the latter may also be used to
speed up Algorithm~$\algMn$.
The key difference is the stopping point for marking off multiples of
$i$; Algorithm~$\algMn$ only marks off through the $n$th multiple of $i$. 
Because of this early stopping point, Algorithm~$\algMn$ has time bound
$O(n^2)$, whereas the corresponding bound for a naive version of the sieve
of Eratosthenes is $O(n^2\log n)$.

The space bound of Lemma~\ref{lem:naive} can be reduced by modifying
the algorithm.
As discussed above, standard segmenting allows subintervals of
size $O((n^2)^{1/2})=O(n)$. By using Diophantine approximation 
the space bound could even be
reduced to $O(n^{2/3}(\log n)^{5/3})$.

Suppose that it is possible to store all $n^2$ bits of the vector $A$.
If the bit vector $A$ associated 
with the computation of $M(n-1)$ is saved, 
then $M(n)$ may be computed in $O(n)$ additional time.  
We simply count how many bits are \emph{not} set 
in $S = [A[n], A[2n], \ldots, A[n^2]]$, 
and increment $M(n-1)$ by that amount.  Let the number of set bits
in $S$ be denoted by $\delta(n)$, so the number of unset bits 
is $n-\delta(n)$. 
We can compute $\delta(n)$ in $O(n)$ time,
and we can compute $M(n)$ using
\begin{equation}			\label{eq:Mviadelta}
 M(n) = M(n-1) + (n - \delta(n))  = 
  \sum_{k = 1}^{n}\big( k - \delta(k)\big) = \frac{n^2 + n}{2} - 
  \sum_{k = 1}^n \delta(k).
\end{equation}
\S\ref{subsec:differences} shows how to compute $\delta(n)$ almost as quickly, 
using only $O(n)$ space.  

\subsection{Computing $M(n)$ Incrementally}	\label{subsec:differences}

We compute $M(n)$ incrementally using~\eqref{eq:Mviadelta}.
More precisely, we compute
$\delta(k)$ for all $k \leq n$ where $\delta(n)$ counts the elements in $\{
n, 2n, 3n, \ldots, n^2 \} = \{ mn : 1 \leq m \leq n\}$ that appear in the
$(n-1) \times (n-1)$ multiplication table.  If $mn$ appears in the smaller
multiplication table then it may be factored so that each factor is strictly
less than $n$.  If $m = ij$ and $n = gh$, then $mn = (ij)(gh) = (ih)(jg)$.
If $ih < n$ and $jg <n$ then the product $mn$ has already appeared in the table.
Observe that $ih < n$ iff $i < g$ and $jg <n$ iff $j<h$.  
Thus, to compute $\delta(n)$, we need to count distinct products
$ij$ with $0< i < g$ and $0 < j < n/g$ for each divisor $g$ of $n$ with
$g \leq \sqrt{n}$.
By counting the distinct products in
the shape formed by the union of rectangles 
whose boundaries are determined by the divisors
of $n$, we may compute $\delta(n)$.

\SetAlgoRefName{\algdeltan}
\begin{algorithm}
\caption{Computing $\delta(n)$}
\SetKwInOut{Input}{Input} \SetKwInOut{Output}{Output}

\Input{$D = [[d_0=1,n],\ldots, [d_{\ell -1}, n/d_{\ell -1}]]$,
containing the ordered pairs of divisors of $n$,
where $d_{\ell -1}$ is the largest divisor in $[1, \sqrt{n}]$.}
\Output{$\delta(n)$}
\BlankLine
Initialize counters $i = 1$ and $k =0$\\
Initialize a bit vector $A$ of length $n$ to 0.\\ 
\For{ $ i < D[\ell -1][0]$ }{
\If{ $i == D[k][0]$ }{ Increment $k$}
  \For{ $i \leq j < D[k][1]$ }{
  Set $A[ij] = 1$
  }
 } 
 \Return Hamming weight of $A$
\end{algorithm}

\begin{remark} 
If the input to Algorithm $\algdeltan$ is missing one or more divisor pairs, 
then the output (Hamming weight of $A$) is a lower bound on $\delta(n)$.

\end{remark}

\begin{example}
In Figure $\ref{example_42}$, the gray area
corresponds to the products that Algorithm~$\algdeltan$ constructs
given the divisor pairs $2\cdot 21$, $3 \cdot 14$, and $6 \cdot 7$ of $42$.

\begin{figure}[ht]
\caption{ The shape for computing $\delta(42)$}
\label{example_42}
    \centering
    \resizebox{0.8\textwidth}{!}{

\begin{tikzpicture}

\fill[gray] (1,1) -- (21,1) -- (21,2) -- (14,2) -- (14,3) -- (7, 3) -- (7, 6) -- ( 5, 6) -- (5, 5) -- (4, 5) -- (4, 4 ) -- (3, 4) -- (3, 3) -- (2, 3) -- (2, 2 ) -- (1, 2) --   cycle;

\draw (0, 0) grid (22, 7);
\draw [ultra thick] (1,0) -- (1,7);
\draw [ultra thick] (0,1) -- (22,1);

\foreach \j in {1,...,6} {
	\foreach \i in {1,...,21} {
        		\pgfmathtruncatemacro{\label}{\i*\j};
        		\node (\label) at (\i+ .5, \j +.5) {\label};
}}

\foreach \j in {1,...,6} {
	\foreach \i in {1,...,1} {
        		\pgfmathtruncatemacro{\label}{\i*\j};
        		\node (\label) at (.5, \j +.5) {\label};
}}

\foreach \j in {1,...,1} {
	\foreach \i in {1,...,21} {
        		\pgfmathtruncatemacro{\label}{\i*\j};
        		\node (\label) at (\i + .5, .5) {\label};
}}

\node (x) at (.5,.5) {\large $\times$};

\end{tikzpicture} 

 }
\end{figure}
\end{example}

Algorithm $\algdeltan$ runs in time proportional to the 
area of the shaded region (which we call \emph{the shape}).
In general, an upper bound is $O(n \log n)$, obtained by noting that
no product in a rectangle is larger than $n$, so the total area is
bounded by the area under a hyperbola.  
A different upper bound is $O(n \tau(n))$, where $\tau(n)$ counts the
divisors of $n$.  This bound comes from the fact that, 
for each divisor of $n$, we
construct a rectangle of area less than $n$.  Both of these upper bounds
can over-estimate. 
The $O(n\log n)$ bound over-estimates when $n$ is not smooth,
and the $O(n \tau(n))$ bound
over-estimates when $n$ is smooth~\cite{CP}. 
The first bound may be used to show that
$M(n)$ can be tabulated in time $O(n^2 \log n)$; 
this bound can be reduced to $o(n^2\log n)$ by using deeper results on
divisors, as in Theorem~\ref{thm:fastalg}.

Following \cite{erdos-tenenbaum,ford08}, we
let $\tau(n; y, z)$ be the number of divisors $d$ of $n$ which satisfy 
$y <d \leq z$, and 
$\tau^{+}(n) = | \{ k \in \mathbb{Z} : \tau(n, 2^k, 2^{k+1})\geq 1 \}|$. 
Lemma~\ref{lem:tauplus} (due to Ford) 
bounds the mean value of $\tau^{+}(n)$.

\begin{Lem}[{Ford \cite[Corollary 5]{ford08}}]		\label{lem:tauplus}
If $c = 0.086\ldots$ is as above, then
\[
\frac{1}{n} \sum_{k \leq n}\tau^{+}(k)
=\Theta\Big(\frac{\log n }{\Phi(n)}\Big)\,.
\]
\end{Lem}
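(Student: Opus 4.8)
The plan is to interpret $\sum_{k\le n}\tau^{+}(k)$ as a sum over dyadic scales of the number of integers possessing a divisor in a dyadic interval, and then to feed each scale into Ford's distribution-of-divisors estimate. Write $H(x,y,z)$ for the number of $k\le x$ that have at least one divisor $d$ with $y<d\le z$. Since $\tau^{+}(k)=\sum_{j\ge 0}\mathbf 1\{\tau(k;2^j,2^{j+1})\ge 1\}$ simply counts the dyadic blocks $(2^j,2^{j+1}]$ containing a divisor of $k$, interchanging the two summations gives
\[
\sum_{k\le n}\tau^{+}(k)=\sum_{0\le j\le\log_2 n}H(n,2^j,2^{j+1}),
\]
so the whole problem reduces to estimating this sum over scales.

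The essential input is Ford's theorem on integers with a divisor in a short interval, which yields $H(x,y,2y)\asymp x/\Phi(y)$ throughout the range $C\le y\le\sqrt x$ for an absolute constant $C$; this is precisely the mechanism underlying Corollary~5 of \cite{ford08}. To stay inside this range I would first use the divisor symmetry $d\mapsto k/d$, which reflects the occupied blocks of $k$ about $\tfrac12\log_2 k$, to see that $\tau^{+}(k)$ is, up to a bounded factor, twice the number of occupied blocks below $\sqrt k$; this confines the relevant scales to $j\le\tfrac12\log_2 n$, where $y=2^j\le\sqrt n$. The $O(1)$ smallest scales $j\le C'$ I would discard using the trivial bound $0\le H\le n$, since they contribute only $O(n)$. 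Substituting $y=2^j$, so that $\log y\asymp j$ and $\log\log y\asymp\log j$ and hence $\Phi(2^j)\asymp j^{c}(\log j)^{3/2}$, I obtain
\[
\sum_{k\le n}\tau^{+}(k)\asymp n\sum_{C'\le j\le\frac12\log_2 n}\frac{1}{j^{c}(\log j)^{3/2}}.
\]

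What remains is a one-variable summation estimate. Although the summand decreases in $j$, the exponent $1-c$ is positive, so the partial sums grow and a fixed proportion of the total mass sits in the top dyadic range $[\tfrac14\log_2 n,\tfrac12\log_2 n]$, where $\log j$ is essentially constant. Comparing with $\int_2^{J}t^{-c}(\log t)^{-3/2}\,dt$ for $J=\tfrac12\log_2 n$ gives
\[
\sum_{C'\le j\le J}\frac{1}{j^{c}(\log j)^{3/2}}\asymp\frac{J^{1-c}}{(\log J)^{3/2}}\asymp\frac{(\log n)^{1-c}}{(\log\log n)^{3/2}},
\]
using $J\asymp\log n$ and $\log J\asymp\log\log n$. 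Dividing the previous display by $n$ therefore yields $\tfrac1n\sum_{k\le n}\tau^{+}(k)\asymp(\log n)^{1-c}(\log\log n)^{-3/2}$, which is exactly $\log n/\Phi(n)$, as claimed.

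The substantive difficulty is concentrated entirely in the cited estimate $H(x,y,2y)\asymp x/\Phi(y)$, which is Ford's deep theorem and which I would treat as a black box. The only delicate bookkeeping on my own part is verifying that this estimate is applied with the correct uniformity as $j$ ranges up to $\tfrac12\log_2 n$ (allowing $y$ as large as $\sqrt n$), and confirming that the small-scale truncation and the symmetry reduction for the large scales really do cost only a bounded factor. Granting Ford's input, both the interchange of summation and the final series evaluation are routine.
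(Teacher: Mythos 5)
The paper does not prove this lemma at all: it is quoted verbatim as Ford's Corollary~5 of \cite{ford08}, and the deep input is treated as a black box. Your proposal is a correct reconstruction of how that corollary is actually deduced in Ford's paper from his Theorem~1, the estimate $H(x,y,2y)\asymp x/\bigl((\log y)^{c}(\log\log y)^{3/2}\bigr)$ valid uniformly for $y_0\le y\le\sqrt{x}$: the interchange $\sum_{k\le n}\tau^{+}(k)=\sum_{j}H(n,2^j,2^{j+1})$ is immediate from the definition, and your handling of the three ranges is sound. Two small bookkeeping remarks. First, the divisor-symmetry step $d\mapsto k/d$ (each dyadic block below $\sqrt{k}$ corresponds to at most two blocks above, and conversely) is needed only for the \emph{upper} bound, to dispose of scales $2^j>\sqrt{k}$ where Ford's estimate does not apply; the lower bound follows trivially by discarding those scales, since every summand $H(n,2^j,2^{j+1})$ is nonnegative. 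Second, your final summation estimate is right but the upper bound deserves the split you implicitly use: $\sum_{C'\le j\le J}j^{-c}(\log j)^{-3/2}$ restricted to $j\le\sqrt{J}$ contributes only $O(J^{(1-c)/2})$, while on $[\sqrt{J},J]$ one has $(\log j)^{-3/2}\asymp(\log J)^{-3/2}$, giving the two-sided bound $\asymp J^{1-c}(\log J)^{-3/2}$ with $J\asymp\log n$, i.e.\ $\log n/\Phi(n)$ after dividing by $n$. So your argument does not diverge from the source's route --- it \emph{is} the source's route --- and what it buys, relative to the paper under review, is simply making visible that the averaged statement follows from the single-scale theorem by routine dyadic summation, with all the depth concentrated in $H(x,y,2y)\asymp x/\Phi(y)$.
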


\begin{theorem}		\label{thm:Alg2bounds}
Algorithm $\algdeltan$ computes $\delta(n)$ in space $O(n)$ and
in time $O(n \tau^{+}(n))$.
\end{theorem}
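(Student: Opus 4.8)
The plan is to reduce the running time to the number of times the inner loop body executes --- i.e.\ the area of the shape --- and then bound that area by $O(n\tau^{+}(n))$; the space bound is immediate. First I would note that initializing the length-$n$ bit vector $A$ and computing its Hamming weight each cost $O(n)$, and that the outer loop performs $O(\sqrt n)$ iterations of constant overhead, so the total time is $O(n)$ plus the total number of inner-loop iterations. Since the only sizable data structures are $A$ (of $n$ bits) and the divisor list $D$ (of $\tau(n)=n^{o(1)}$ pairs, each $O(\log n)$ bits), the space is $O(n)$. It therefore remains to bound the area.

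Next I would make explicit what the counter $k$ computes. A short induction on the outer loop shows that, just before the inner loop at column $i$, the index $k$ points to the pair $[d_k,n/d_k]$ in which $d_k=g(i)$ is the smallest divisor of $n$ exceeding $i$ (here $i<d_{\ell-1}\le\sqrt n$, so such a divisor always exists, $g(i)\le\sqrt n$, and hence $n/g(i)\ge\sqrt n>i$). Thus column $i$ sets $A[ij]$ for $i\le j<n/g(i)$, contributing $n/g(i)-i$ iterations, and the area equals $\sum_{i=1}^{d_{\ell-1}-1}\big(n/g(i)-i\big)\le\sum_{i=1}^{d_{\ell-1}-1}n/g(i)$. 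Regrouping the values of $i$ by the consecutive divisors they fall between --- for $d_k\le i<d_{k+1}$ one has $g(i)=d_{k+1}$, and there are $d_{k+1}-d_k$ such $i$ --- this gives
\[
\text{area}\ \le\ n\sum_{k=0}^{\ell-2}\frac{d_{k+1}-d_k}{d_{k+1}}\ =\ n\sum_{k=0}^{\ell-2}\Big(1-\frac{d_k}{d_{k+1}}\Big).
\]
So the whole problem reduces to showing $\sum_{k}\big(1-d_k/d_{k+1}\big)=O(\tau^{+}(n))$.

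The hard part is this last estimate, which I would handle by a dyadic decomposition tying the sum to the quantity $\tau^{+}(n)$ counts. Assign each divisor $d_k\le\sqrt n$ to the dyadic interval $(2^m,2^{m+1}]$ containing it, and let $R$ be the number of occupied such intervals, so $R\le\tau^{+}(n)$. For a consecutive pair lying in the same interval $(2^m,2^{m+1}]$ I would use $1-d_k/d_{k+1}=(d_{k+1}-d_k)/d_{k+1}<(d_{k+1}-d_k)/2^m$ and telescope across that interval: if its divisors are $e_1<\dots<e_r$ then $\sum_s(e_{s+1}-e_s)/2^m=(e_r-e_1)/2^m<1$, since $e_r-e_1<2^{m+1}-2^m=2^m$. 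Summing over the $R$ occupied intervals bounds the ``same-interval'' contribution by $R$. The remaining ``crossing'' pairs are the transitions between consecutive occupied intervals; as the divisors increase, there are exactly $R-1$ of these, each contributing a term $<1$. Adding the two parts yields $\sum_k(1-d_k/d_{k+1})<2R\le 2\tau^{+}(n)$, whence the area is $O(n\tau^{+}(n))$ and the total time is $O(n)+O(n\tau^{+}(n))=O(n\tau^{+}(n))$, using $\tau^{+}(n)\ge1$.

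The only step needing care beyond routine telescoping is the bookkeeping in this dyadic argument --- that the number of crossing pairs is governed by the number of occupied dyadic intervals rather than by the number of divisors --- and this is precisely where the improvement from the weak $O(n\log n)$ bound to $O(n\tau^{+}(n))$ comes from. The behaviour of the counter $k$, the regrouping, and the space accounting are all straightforward.
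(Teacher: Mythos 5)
Your proof is correct, and it arrives at the same bound (area at most $2n\,\tau^{+}(n)$) as the paper, but by a genuinely different execution of the dyadic idea. The paper's proof is a one-step geometric covering argument: for each occupied dyadic interval $(2^m,2^{m+1}]$, every rectangle $g\times n/g$ with $g$ a divisor in that interval shares the origin as its corner and is enclosed in the single box $2^{m+1}\times n/2^m$ of area $2n$; summing over occupied intervals gives $\mathcal{A}\le 2n\,\tau^{+}(n)$ with no column-by-column accounting at all. You instead compute the area exactly as $\sum_i \bigl(n/g(i)-i\bigr)$ with $g(i)$ the least divisor exceeding $i$, regroup it into the divisor-gap sum $n\sum_k\bigl(1-d_k/d_{k+1}\bigr)$, and bound that sum by $2\tau^{+}(n)$ by telescoping within each occupied dyadic interval (contribution $<1$ per interval, since consecutive divisors $e_1<\cdots<e_r$ in $(2^m,2^{m+1}]$ satisfy $(e_r-e_1)/2^m<1$) and counting the $R-1$ crossing pairs, each contributing $<1$. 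Both proofs charge $O(n)$ of area to each occupied dyadic interval of divisors, so the saving over the naive $O(n\log n)$ bound comes from the same place, and your bookkeeping is sound (the crossing count is indeed $R-1$ because the sequence of dyadic indices of $d_0<d_1<\cdots$ is nondecreasing, and $R\le\tau^{+}(n)$ since intervals occupied by divisors $\le\sqrt n$ are among those counted by $\tau^{+}$). What your route buys is the sharper intermediate inequality $\mathcal{A}\le n\sum_k\bigl(1-d_k/d_{k+1}\bigr)$, connecting the run-time to the Erd\H{o}s--Tenenbaum-style functional on the divisor sequence, together with an explicit verification of the counter behaviour (that $k$ tracks the least divisor exceeding $i$, and that $n/g(i)>i$ so no column does empty work), which the paper's proof only asserts; what the paper's route buys is brevity, dispensing with the exact area formula and the telescoping entirely.
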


\begin{proof}
By the above discussion concerning $\delta(n)$, the algorithm is correct.  
As $i$ increases, the counter $k$ keeps track of which rectangle boundary 
to use.  
The counter $j$ is then bounded above by the appropriate divisor of $n$.

The space bound is obvious, since the vector $A$ uses $n$ bits.

For the time bound, recall that
the run-time is proportional to an area, say $\mathcal{A}$.   
For each $k$, consider all the divisors of $n$ in the interval $(2^k,
2^{k+1}]$.  They all have the same bottom left corner, namely, the origin,
and shapes range from $2^k \times n/2^k$ to $2^{k+1} \times n/2^{k+1}$. 
Hence they are all enclosed by a rectangle of shape $2^{k+1} \times n/2^k$
which has area $2n$.  Thus we get an upper bound $\mathcal{A} \le 2n
\tau^{+}(n)$.
\end{proof}

Clearly Algorithm~$\algdeltan$ can be invoked repeatedly to tabulate
$M(n)$. For reference we call this (tabulation algorithm)
\emph{Algorithm~$\algdeltanT$}.

\begin{theorem}				\label{thm:fastalg}
Algorithm $\algdeltanT$ tabulates $M(n)$ in space $O(n)$ and time 
\[ O\left( \frac{n^2 \log n}{\Phi(n)} \right). \]
\end{theorem}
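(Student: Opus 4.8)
The plan is to read off the cost of Algorithm~$\algdeltanT$ as the sum, over $k \le n$, of the cost of a single invocation of Algorithm~$\algdeltan$, and then to control that sum using Ford's mean-value estimate (Lemma~\ref{lem:tauplus}). Recall that by~\eqref{eq:Mviadelta} the tabulation of $M(n)$ reduces to computing $\delta(k)$ for every $k \le n$ and accumulating the partial sums; the additive bookkeeping in~\eqref{eq:Mviadelta} costs only $O(1)$ per value of $k$, so the running time is dominated by the calls to Algorithm~$\algdeltan$.

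First I would dispose of the space bound. Each invocation of Algorithm~$\algdeltan$ on input $k$ uses $O(k) = O(n)$ bits for its length-$k$ bit vector, and since the invocations are performed sequentially we may reuse this workspace, so the total space is $O(n)$. The only other storage needed is the divisor list of the current $k$, which occupies $n^{o(1)}$ words and is therefore negligible; note also that the table of values $M(k)$ is output and does not count against the space budget.

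For the time bound, Theorem~\ref{thm:Alg2bounds} gives that the $k$th invocation runs in time $O(k\,\tau^{+}(k))$, so the total time is $O\!\big(\sum_{k \le n} k\,\tau^{+}(k)\big)$. I would bound this crudely by pulling out the weight $k \le n$, writing
\[
\sum_{k \le n} k\,\tau^{+}(k) \;\le\; n \sum_{k \le n} \tau^{+}(k),
\]
and then applying Lemma~\ref{lem:tauplus}, which states that $\sum_{k \le n}\tau^{+}(k) = \Theta\big(n\log n / \Phi(n)\big)$. This yields the claimed bound $O\big(n^2 \log n / \Phi(n)\big)$. One should also check that generating the divisor pairs feeding Algorithm~$\algdeltan$ is not the bottleneck: trial division up to $\sqrt{k}$ for each $k$ costs $O(n^{3/2})$ in total, which is negligible compared with the main term $n^2\log n/\Phi(n)$, since $\Phi(n)$ is only a power of $\log n$.

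The argument is essentially a one-line summation once Theorem~\ref{thm:Alg2bounds} and Lemma~\ref{lem:tauplus} are in hand, so there is no serious obstacle; the only point that requires a moment's care is the treatment of the weight $k$ in the sum. The estimate above is wasteful in that it replaces every $k$ by $n$; a partial-summation argument against $S(x) := \sum_{k \le x}\tau^{+}(k)$ would recover the sharper constant and show the sum is in fact $\Theta\big(n^2\log n/\Phi(n)\big)$, but for the stated $O$-bound the crude inequality is all that is needed.
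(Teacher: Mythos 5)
Your proposal is correct and follows essentially the same route as the paper's proof: apply Theorem~\ref{thm:Alg2bounds} to each call, bound $\sum_{k \le n} k\,\tau^{+}(k)$ by $n \sum_{k \le n} \tau^{+}(k)$, and invoke Lemma~\ref{lem:tauplus}. Your additional checks --- that the bit vector can be reused sequentially for the $O(n)$ space bound and that generating the divisor lists costs only $O(n^{3/2})$ --- are points the paper leaves implicit, and both are handled correctly.
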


\begin{proof}
We compute $M(n)$ by evaluating $\delta(k)$ for $1 \leq k \leq n$.
Using Theorem~\ref{thm:Alg2bounds},
the total run-time is
\[
O\left( \sum_{k \leq n} k \tau^{+}(k) \right)
=O\left( n\sum_{k \leq n}  \tau^{+}(k) \right),
\]
so the result now follows from Lemma~\ref{lem:tauplus}.
\end{proof}

\begin{remark}				\label{rem:Mnlogn}
In view of Ford's result~\eqref{eq:FordM}, the time bound given in
Theorem~\ref{thm:fastalg} can be written as
$O(M(n)\log n)$. We do not know how to give a direct proof of
this without using Ford's results.
\end{remark}

The space bound in Theorem~$\ref{thm:Alg2bounds}$ is for a naive 
implementation. It is not difficult
to see that it  can be reduced to $O(n^{1/2})$ with straightforward
segmentation, or even to $O(n^{1/3}(\log n)^{5/3})$ via Diophantine 
approximation, as in \cite{helfgott17}. 
Algorithm~$\algdeltanT$ represents an improvement by a
factor of $n$ in the naive storage cost and a significant improvement in
run-time for the tabulation problem.  If only a single evaluation is
required, then Algorithm~$\algMn$ may be faster.  
In practice, we observed that 
Algorithm~$\algdeltanT$ is competitive with Algorithm~$\algMn$.
This may be due to
different implied constants in the big-$O$ bounds,
and because Algorithm~$\algMn$ has
a larger memory requirement, which can cause a deviation from the
expected quadratic run time due to cache effects~\cite[Chapter~2]{HP}.
In the next subsection we explain how generating products in specific
residue classes can be used to speed up Algorithm~$\algdeltan$.

\subsection{Working ``modulo $w$''} \label{modcomp}

We may generate products in a multiplication table in specific residue
classes; this is akin to sieving with a wheel, and has two advantages. 
First, if $w$ is the modulus, then the vector used in
Algorithm $\algdeltan$ may be declared to be of
size $\lfloor n/w \rfloor$ and unique products may be counted by residue class. 
Second, by not explicitly constructing small consecutive products, 
but simply counting them, we get a faster algorithm.  
In the following we illustrate these ideas with the examples
$w=1, 2$, and $6$.

\subsubsection{Working ``modulo $1$''}

If $n$ is not prime, then the first row of the table contains the
consecutive integers less than the largest nontrivial divisor of $n$.  Store
the number of consecutive integers and initialize the bit vector $A$ so that
the zero index is associated with the largest divisor of $n$.  Iterate
through each row of the multiplication table starting at the first entry
greater than or equal to the largest divisor.  
Figure $\ref{example_42_frf}$ shows
the area that is considered in computing $\delta(42)$.  The light
gray products are all accounted for because the first row has $20$ distinct
products.  We only construct the products greater than $20$, which are shown
in dark gray.  This improvement reduces both the time and space
requirements by a factor of $(1-1/p_1)$, where $p_1$ is the smallest prime
factor of~$n$.  As a consequence, it is easy to see that $\delta(2p) = p-1$
if (as usual) $p$ is a prime.

\begin{figure}[ht]
\caption{  The shape for computing $\delta(42)$ working modulo $1$.}
\label{example_42_frf}
    \centering
    \resizebox{0.9\textwidth}{!}{

\begin{tikzpicture}

\fill[lightgray] (1,1) -- (21,1) -- (21,2) -- (14,2) -- (14,3) -- (7, 3) -- (7, 6) -- ( 5, 6) -- (5, 5) -- (4, 5) -- (4, 4 ) -- (3, 4) -- (3, 3) -- (2, 3) -- (2, 2 ) -- (1, 2) --   cycle; 
\fill[gray] (11,2) -- (14, 2) -- (14, 3) -- (11, 3) -- cycle;
\fill[gray] (6,4) -- (7, 4) -- (7, 6) -- (5, 6) -- (5, 5) -- (6, 5) -- cycle;

\draw (0, 0) grid (22, 7);
\draw [ultra thick] (1,0) -- (1,7);
\draw [ultra thick] (0,1) -- (22,1);

\foreach \j in {1,...,6} {
	\foreach \i in {1,...,21} {
        		\pgfmathtruncatemacro{\label}{\i*\j};
        		\node (\label) at (\i+ .5, \j +.5) {\label};
}}

\foreach \j in {1,...,6} {
	\foreach \i in {1,...,1} {
        		\pgfmathtruncatemacro{\label}{\i*\j};
        		\node (\label) at (.5, \j +.5) {\label};
}}

\foreach \j in {1,...,1} {
	\foreach \i in {1,...,21} {
        		\pgfmathtruncatemacro{\label}{\i*\j};
        		\node (\label) at (\i + .5, .5) {\label};
}}

\node (x) at (.5,.5) {\large $\times$};

\end{tikzpicture} 

 }
\end{figure}

\subsubsection{Working ``modulo $2$''}

If $n$ is composite and not of the form $2p$ for $p$ a prime, then its shape
has nontrivial entries in the first two rows.  Create a bit vector
associated with odd numbers.  The first row is indexed by
consecutive odd numbers
up to some bound.  Either the first row or the second row will contain the
bound for the consecutive even numbers that are stored.  For rows associated
with an odd multiplier, start with the lower bound associated with the odd
vector and iterate through the table creating only the odd entries.  For the
even vector, consider even rows and the even numbers in the odd rows.  This
reduces the time by reducing area although the overhead in setting up the
loops to iterate through the table in the specified manner is higher.  More
importantly, it reduces the memory requirement.  By splitting the products
into residue classes modulo $2$, we require half the storage.  The above
discussion also makes it easy to see that $\delta(3p) = p-1 + \lfloor
(p-1)/2 \rfloor$.

Figure $\ref{example_75_f2rf}$ shows the area that is considered in
computing $\delta(75)$ modulo 2.  The products in light gray are accounted
for by a counting argument and the products in dark gray are constructed. 
That is, the bit vector storing even numbers starts at $50$, and then the
even products $52$ and $56$ in dark gray are constructed.  Thus, there are
$24 + 2 = 26$ unique even products.  The bit vector storing odd numbers
starts at $25$, and constructs the products $27, 33, 39$.  Therefore, there
are $12 + 3 = 15$ unique odd products, and $\delta(75) = 26 + 15 = 41$.

\begin{figure}[ht]
\caption{  The shape for computing $\delta(75)$ working modulo $2$. }
\label{example_75_f2rf}
    \centering
    \resizebox{0.9\textwidth}{!}{

\begin{tikzpicture}

\fill[lightgray] (1,1) -- (25,1) -- (25,3)  --  (15,3) -- (15, 5) -- (4, 5) -- (4,4) -- (3, 4) -- (3, 3) -- (2, 3) -- (2,2) -- (1, 2) --  cycle; 
\fill[gray] (9,3) -- (9, 4) -- (10, 4) -- (10,3) -- cycle;
\fill[gray] (11,3) -- (11, 4) -- (12, 4) -- (12,3) -- cycle;
\fill[gray] (13,3) -- (14,3) -- (14, 4) -- (15, 4) -- (15,5) -- (13, 5) -- cycle ;

\draw (0, 0) grid (26, 6);
\draw [ultra thick] (1,0) -- (1,6);
\draw [ultra thick] (0,1) -- (26,1);

\foreach \j in {1,...,5} {
	\foreach \i in {1,...,25} {
        		\pgfmathtruncatemacro{\label}{\i*\j};
        		\node (\label) at (\i+ .5, \j +.5) {\label};
}}

\foreach \j in {1,...,5} {
	\foreach \i in {1,...,1} {
        		\pgfmathtruncatemacro{\label}{\i*\j};
        		\node (\label) at (.5, \j +.5) {\label};
}}

\foreach \j in {1,...,1} {
	\foreach \i in {1,...,25} {
        		\pgfmathtruncatemacro{\label}{\i*\j};
        		\node (\label) at (\i + .5, .5) {\label};
}}

\node (x) at (.5,.5) {\large $\times$};

\end{tikzpicture} 

 }
\end{figure}

\subsubsection{Working ``modulo $6$''}

A naive invocation of Algorithm $\algdeltan$ to compute $\delta(377)$
requires the construction of $270$ products.  By constructing products in
residue classes modulo $6$ only $119$ products need to be constructed.  In
Figure $\ref{example_377}$, we see that the sixth row tells us there are $28$
consecutive multiples of $6$.  Therefore, we only need to construct products
$0 \pmod{6}$ that are greater than $168$.  Similarly, the third row tells us
that there are $14$ consecutive numbers $3 \pmod{6}$.  Therefore, we only
construct products $3 \pmod{6}$ that are greater than $84$.  The second row
tells us that we only need to construct products greater than $56$ when we
deal with $2,4 \pmod{6}$ cases.  Finally, the first row tells us that we
need to construct products greater than $28$ for the $1,5 \pmod{6}$ cases.

\begin{figure}[ht]
\caption{ The shape for computing $\delta(377)$ working modulo $6$.   }
\label{example_377}
    \centering
    \resizebox{.9\textwidth}{!}{

\begin{tikzpicture}

\fill[lightgray] (1,1) -- (29,1) -- (29,13) -- (12, 13) --  (12, 12) -- (11, 12) -- ( 11, 11) -- (10, 11) -- ( 10, 10 ) -- (9, 10) -- (9, 9) -- (8, 9) -- (8,8) -- (7, 8) -- (7,7) 
-- ( 6, 7) -- (6,6) -- (5,6) -- (5,5) -- (4, 5) -- (4,4) -- ( 3, 4) -- (3,3) -- (2,3) -- (2,2) -- ( 1,2) --  cycle; 
\fill[gray] (7,5) -- (7, 6) -- (8, 6) -- (8,5) -- cycle;
\fill[gray] (11,5) -- (11, 6) -- (12, 6) -- (12,5) -- cycle;
\fill[gray] (13,5) -- (13, 6) -- (15, 6) -- (15,5) -- cycle;
\fill[gray] (16,4) -- (16, 6) -- (18, 6) -- (18, 4) -- cycle; 
\fill[gray] (7,7) -- (7, 8) -- (8, 8) -- (8, 7) -- cycle; 
\fill[gray] (8,8) -- (8, 9) -- (9, 9) -- (9, 8) -- cycle; 
\fill[gray] (19,4) -- (21, 4) -- (21, 5) -- ( 22, 5) -- (22, 4) -- (24, 4) -- ( 24, 6) -- (19, 6) -- cycle;
\fill[gray] (25,4) -- (27, 4) -- (27, 5) -- ( 28, 5) -- (28, 4) -- (29, 4) -- ( 29, 6) -- (25, 6) -- cycle;
\fill[gray] (10,7) -- (12,7) -- (12,12) -- (11, 12) -- (11,11) -- (10, 11) -- (10,10) -- (11,10) -- (11,9) -- (10,9) -- cycle;
\fill[gray] (13,7) -- (16, 7) -- (16,8) -- (15,8) -- (15,9) -- (14,9) -- (14,10) -- (13 ,10) -- cycle;
\fill[gray] (13,10) -- (15, 10) -- (15, 12) -- ( 13, 12) -- cycle ;
\fill[gray] (15,9) -- (16,9) -- (16,10) -- (15,10) -- cycle;
\fill[gray] (15,11) --(29, 11) -- (29, 13) -- (15,13) -- cycle;
\fill[gray] ( 16, 10) -- (29, 10) -- (29, 11) -- (16, 11) -- cycle;
\fill[gray] (16,7) -- (18,7) --(18,10) -- (17, 10) -- (17,9) -- (16, 9) -- cycle;
\fill[gray] (19,7) -- (24, 7) -- (24,8) -- (21,8) -- (21, 9) -- (22,9) -- (22,10) -- (19,10) -- cycle; 
\fill[gray] (22,8) -- (29,8) -- (29, 10) -- (22, 10) --cycle;
\fill[gray] (25,7) -- (29, 7) -- (29, 8) -- (25, 8) -- cycle; 

\draw (0, 0) grid (30, 14);
\draw [ultra thick] (1,0) -- (1,14);
\draw [ultra thick] (0,1) -- (30,1);

\foreach \j in {1,...,13} {
	\foreach \i in {1,...,29} {
        		\pgfmathtruncatemacro{\label}{\i*\j};
        		\node (\label) at (\i+ .5, \j +.5) {\label};
}}

\foreach \j in {1,...,13} {
	\foreach \i in {1,...,1} {
        		\pgfmathtruncatemacro{\label}{\i*\j};
        		\node (\label) at (.5, \j +.5) {\label};
}}

\foreach \j in {1,...,1} {
	\foreach \i in {1,...,29} {
        		\pgfmathtruncatemacro{\label}{\i*\j};
        		\node (\label) at (\i + .5, .5) {\label};
}}

\node (x) at (.5,.5) {\large $\times$};

\end{tikzpicture} 

 }
\end{figure}

It is possible to create rules for the evaluation of $\delta(mp)$ via counting
arguments.  We count consecutive products in
residue classes modulo $w = \lcm(1, 2, 3, \ldots, m-1)$.  The third author
created a website~\cite{shapes}
that may be used to count the products constructed and
display the shape associated with a $\delta(n)$ computation when using
Algorithm $\algdeltan$ naively, or 
with a modulus of $w = 1, 2, 6, 12, 60,$ or $120$.

\subsection{Subquadratic Tabulation}

Recall that if all $n^2$ bits of $A$ can be held at once in Algorithm
$\algMn$, then tabulation and evaluation are essentially the same problem.  
We apply this idea to computing $\delta(n)$.  
Consider the use of Algorithm $\algdeltan$ in
computing $\delta(6\cdot7)$, $\delta(6\cdot9)$, $\delta(6\cdot11)$, and
$\delta(6\cdot13)$.  The divisor list for each of these is of the form $[ 1,
6\cdot k], [2, 3 \cdot k], [3, 2\cdot k]$, and $[ 6, k]$ for $k = 7, 9, 11,
13$.  One shape is always a subset of the next shape and so the set of
distinct products in each shape is a subset of the next such set.  Rather than
think of four independent computations, we consider the one computation of
$\delta(6\cdot 13)$.  Unlike in Algorithm $\algdeltan$ where the bit
vector storing distinct products is populated by a row of the multiplication
table, we will populate the bit vector by incrementally shifting the
end-points of the rectangles.  While computing $\delta(6\cdot13)$ we
can ``learn'' $\delta(6\cdot7)$, $\delta(6\cdot9)$, and $\delta(6\cdot11)$.
Instead of computing $\delta(6\cdot9)$ from the beginning,
we use the computation of $\delta(6\cdot 7)$ and only account for the new
products that may arise.

In general, this requires that we tabulate $\delta(n)$, for those $n$ that
have similar shapes.  For a fixed $m$ and primes $p\approx q$, 
the divisor lists of $mp$ and $mq$ are
very similar.
In particular, if both primes are larger than $m$, the first
entries in the divisor lists correspond only to the divisors of $m$.  
If $p < q$, we may
re-use the bit vector from computing $\delta(mp)$ to compute $\delta(mq)$. 
All we need to account for are the
new products that appear
as the corresponding rectangles are shifted.

\SetAlgoRefName{\multipliers}
\begin{algorithm}
\caption{Computing $\delta(mq)$ given $\delta(mp)$ for
primes $p$, $q$ ($m < p < q$).}
\SetKwInOut{Input}{Input} \SetKwInOut{Output}{Output}

\Input{A bit vector  $A$ of length $mq$ with weight $w$ containing the products
from computing $\delta(mp)$.  The divisor lists for $mp$ and $mq$:  $D_p =
[[d_0=1,mp],[d_1, mp/d_1], \ldots ]$ and $D_q =  [[d_0=1,mq],[d_1, mq/d_1],
\ldots ]$ both of length $\ell$.}
\Output{$\delta(mq)$}
\BlankLine
Initialize counters $i = 1$ and $k =0$\\
\For{ $ i < D_p[\ell -1][0]$ }{
\If{ $i == D_p[k][0]$ }{ Increment $k$}
  \For{ $D_p[k][1] \leq j < D_q[k][1]$ }{
  \If{   $A[ij] == 0$ }{ 
  Set $A[ij] = 1$ \\
  Increment $w$ }
  }
 } 
 \Return  $w$
\end{algorithm}

\begin{Lem}			\label{lem:ndq}
If $mq \le n$, then
Algorithm $\multipliers$ computes $\delta(mq)$ in time 
$O(m d(q) \log{n})$, where $d(q):=q-p$.
\end{Lem}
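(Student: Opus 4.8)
The plan is to bound the running time by the total number of cells processed in the inner loop, since in our model of computation each such cell costs only $O(1)$: the index $ij$ satisfies $ij < mq \le n$, so forming $ij$ is a unit-cost arithmetic operation on a word of $\Theta(\log n)$ bits, and reading, testing, and possibly writing the bit $A[ij]$ and incrementing $w$ are all unit-cost. Thus it suffices to count inner-loop iterations.

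First I would record the shape of the two divisor lists. Because $p$ and $q$ are primes exceeding $m$, every divisor of $m$ is at most $m<\sqrt{mp}<\sqrt{mq}$, while every divisor of $mp$ (resp.\ $mq$) that exceeds $m$ has the form $pd$ (resp.\ $qd$) with $d\mid m$ and therefore exceeds $\sqrt{mp}$ (resp.\ $\sqrt{mq}$). Hence the first coordinates occurring in $D_p$ and in $D_q$ are exactly the divisors $1=d_0<d_1<\cdots<d_{\ell-1}=m$ of $m$; the two lists share these first coordinates and have common length $\ell=\tau(m)$, differing only in their second coordinates, where $D_p[k][1]=mp/d_k$ and $D_q[k][1]=mq/d_k$.

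Next I would count the iterations. As $i$ runs from $1$ to $m-1$, the counter $k$ is advanced precisely when $i$ meets a divisor of $m$, so the block $i\in[d_{k-1},d_k)$ — consisting of $d_k-d_{k-1}$ integers — is handled with the fixed boundaries $D_p[k][1]$ and $D_q[k][1]$. For each such $i$ the inner loop ranges over $mp/d_k\le j<mq/d_k$, i.e.\ over exactly $m(q-p)/d_k=m\,d(q)/d_k$ values (an integer, as $d_k\mid m$). Since each $i$ lies in a unique block, no cell is visited twice, and the total number of iterations is
\[
\sum_{k=1}^{\ell-1}(d_k-d_{k-1})\cdot\frac{m\,d(q)}{d_k}
= m\,d(q)\sum_{k=1}^{\ell-1}\frac{d_k-d_{k-1}}{d_k}.
\]

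The crux — the one step beyond bookkeeping — is to bound this sum by $\log m$ rather than by the trivial $\ell=\tau(m)$, which would be far too large for highly composite $m$ (where $\tau(m)=m^{o(1)}$ dwarfs every power of $\log m$). Here I would apply the elementary inequality $1-x\le-\log x$ (natural log) with $x=d_{k-1}/d_k$, giving
\[
\frac{d_k-d_{k-1}}{d_k}=1-\frac{d_{k-1}}{d_k}\le\log\frac{d_k}{d_{k-1}},
\]
so the sum telescopes to $\log(d_{\ell-1}/d_0)=\log m$. The iteration count, and hence the running time, is therefore $O(m\,d(q)\log m)=O(m\,d(q)\log n)$ by the hypothesis $m\le mq\le n$. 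Finally, to see that the value returned is indeed $\delta(mq)$, the same block decomposition shows the visited cells are exactly those of the $mq$-shape absent from the $mp$-shape, so that each distinct new product is counted once by the test $A[ij]==0$, giving $w=\delta(mq)$.
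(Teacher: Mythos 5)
Your proof is correct and takes essentially the same route as the paper's, whose entire proof is the one-line assertion that there are $O(m\log n)$ individual products to check per unit shift of the prime; your block decomposition over $i\in[d_{k-1},d_k)$ and the telescoping bound $\sum_k (d_k-d_{k-1})/d_k\le\log m$ simply make that assertion rigorous, with the factor $d(q)=q-p$ counting the unit shifts exactly as in the paper.
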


\begin{proof}
There are $O(m\log n)$ individual products to check per unit shift.
\end{proof}

The benefit of Algorithm~$\multipliers$
over Algorithm~$\algdeltan$ is that, 
while computing $\delta(mq)$, we learn $\delta(mp)$ for all $p < q$.  
In computing $M(n)$, we may compute $\delta(mq)$ at a cost of $O(n\log n)$, 
but in the process we learn $\delta(mp)$ for all prime $p$,
$m < p < q$, for no additional cost.

\begin{theorem}\label{fast_comp} 
Algorithms $\algdeltan$ and $\multipliers$ may be combined
to tabulate $M(n)$ in time
$O (n^2/L^{1/\sqrt{2} + o(1)} ),$
where  $L = L(n) := \exp{( \sqrt{ \log n \log \log n }) } $.
\end{theorem}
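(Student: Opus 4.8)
The plan is to fix a smoothness threshold $y = y(n)$, to be optimized at the end, and to split the tabulation of $\delta(k)$ over $k \le n$ according to the size of the largest prime factor $P(k)$ of $k$. Write $\Psi(x,y) := |\{k \le x : P(k) \le y\}|$ for the usual count of $y$-smooth integers. For the $y$-smooth arguments ($P(k) \le y$) I would simply invoke Algorithm~$\algdeltan$ (or its ``modulo~$w$'' refinement from \S\ref{modcomp}); since the shape for $\delta(k)$ lies under a hyperbola, each such evaluation costs $O(k\log k) = O(n\log n)$, so this branch contributes
\[
O\!\left( n\log n \cdot \Psi(n,y) \right).
\]
For the remaining arguments I would peel off the largest prime: writing $k = mp$ with $p = P(k) > y$, I group the $k$ by the cofactor $m = k/p$ and, within each class, tabulate $\delta(mp)$ over the relevant primes $p \le n/m$ using the incremental sliding of Algorithm~$\multipliers$.

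The cost of a single class is exactly what Lemma~\ref{lem:ndq} controls: as $p$ runs through consecutive primes the shifts $d(q)=q-p$ telescope, so the per-class work is
\[
O\!\left( m\log n \sum d(q) \right) = O\!\left( m\log n \cdot \tfrac{n}{m} \right) = O(n\log n),
\]
to which one adds a single bootstrap evaluation by Algorithm~$\algdeltan$ per class, whose total I would check to be of lower order. Because $p>y$ forces $m=k/p<n/y$, there are $O(n/y)$ classes, and this branch contributes
\[
O\!\left( \tfrac{n^2\log n}{y} \right).
\]

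It then remains to balance the two contributions. Using the standard estimate $\Psi(n,y) = n/L^{1/(2\beta)+o(1)}$ valid when $y = L^{\beta}$ (here $u = \log n/\log y = \tfrac{1}{\beta}\sqrt{\log n/\log\log n}$ and $\Psi = n\,u^{-u(1+o(1))}$), the smooth branch costs $n^2/L^{1/(2\beta)+o(1)}$ and the rough branch costs $n^2/L^{\beta+o(1)}$, the factors $\log n = L^{o(1)}$ being absorbed. Equating the exponents gives $\beta = 1/(2\beta)$, i.e.\ $\beta = 1/\sqrt2$; taking $y = L^{1/\sqrt2}$ then yields the claimed bound $O\!\left(n^2/L^{1/\sqrt2+o(1)}\right)$.

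The step I expect to be the main obstacle is making the rough branch genuinely valid and cheap for \emph{every} $k$ with $P(k)>y$. The sliding of Algorithm~$\multipliers$ relies on the shapes of $mp$ and $mq$ being nested, which rests on the divisors of $mp$ below $\sqrt{mp}$ being exactly the divisors of $m$; this is precisely the hypothesis $m<p$ of Lemma~\ref{lem:ndq}. Once $m>\sqrt n$ the peeled prime $p\le n/m$ can be smaller than $m$, some internal rectangle boundaries move as $p$ advances, and the clean nesting breaks. I would therefore need either to extend the incremental bookkeeping to this regime or to argue that the exceptional $k$ are sparse enough to be returned to Algorithm~$\algdeltan$ without disturbing the balance. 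Controlling this, together with establishing the $\Psi$-asymptotics uniformly across the relevant range of $y$ and verifying the $o(1)$ accounting, is where the real work lies; the balance computation itself, once the two branch costs are in hand, is routine.
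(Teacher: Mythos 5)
Your proposal is essentially the paper's own proof: the same split of the arguments $k\le n$ into $y$-smooth and non-smooth classes with $y=L^{\gamma}$, the same Canfield--Erd\H{o}s--Pomerance count $n/L^{1/(2\gamma)+o(1)}$ for the smooth branch handled by Algorithm~$\algdeltan$, the same per-cofactor $O(n\log n)$ telescoping over primes via Lemma~\ref{lem:ndq} giving $O(n^2/L^{\gamma+o(1)})$ for the rough branch, and the same balancing at $\gamma=1/\sqrt{2}$. The obstacle you flag --- that the sliding step of Algorithm~$\multipliers$ requires $m<p$, which fails for non-smooth $k=mq$ with largest prime factor $q\le m$ --- is a genuine subtlety, but it is equally unaddressed in the paper, whose proof simply asserts that computing $\delta(mQ)$ ``learns'' $\delta(mq)$ for all primes $q\le Q$ without checking the hypothesis $m<p<q$ of Lemma~\ref{lem:ndq}.
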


\begin{proof}
Let $\gamma$ be a real parameter with $0<\gamma<1$, to be chosen
later.  We split the integers $k\le n$ into two classes.  The first
consists of $k$ that are $L^\gamma$-smooth, that is, all prime factors of
$k$ are at most $L^\gamma$.  There are $n/L^{1/(2\gamma)+o(1)}$
such numbers $k$, as $n\to\infty$, see \cite{CEP}.  For these
values of $k$ we compute $\delta(k)$ via Algorithm~$\algdeltan$, accounting
for a run-time of $O(n^2/L^{1/(2\gamma)+o(1)})$.   The second
class consists of those $k$ that are not $L^\gamma$-smooth; 
write such $k$ as $mq$,
where $q$ is the largest prime factor of $k$, so that $q>L^\gamma$.
Since $k\le n$, the pairs $(m,q)$ that can arise here have
$m < n/L^\gamma$.  For each such pair $(m,q)$ take the largest
prime $Q$ with $mQ\le n$ and compute $\delta(mQ)$ using
Algorithm~$\multipliers$,
so learning $\delta(mq)$ for all primes $q\le Q$, and in particular, for
all primes $q$ with $L^\gamma< q\le Q$.  For each $m$ the run-time
is $O(n\log n)$ by Lemma~\ref{lem:ndq}, 
so the total run-time for all such values
of $m$ is $O(n^2/L^{\gamma+o(1)})$.
These two computations are balanced when 
$\gamma = 1/\sqrt{2}$,
proving the theorem.
\end{proof}

For reference we let \emph{Algorithm~$\fastalg$}
be the algorithm defined by the above proof.

\section{Monte Carlo Estimations}\label{approx_eval}

If $n$ is too large for the exact computation of $M(n)$ to be feasible, 
we can resort to Monte Carlo estimation of $M(n)$.
In the following we describe two different Monte Carlo algorithms, 
which we call the \emph{Bernoulli} and \emph{product} algorithms.
In the descriptions of these two algorithms, we assume that $n$ is fixed,
and $p$ denotes a probability (not a prime number).

\subsection{The Bernoulli Algorithm}		\label{subsec:Bernoulli}
		
We perform a sequence of $T \ge 2$ trials, where each trial involves choosing
a random integer $z \in [1,n^2]$. The integers $z$ are
assumed to be independent and uniformly distributed.
For each $z$, we count a \emph{success} if $z$ appears in the
$n \times n$ multiplication table, i.e.\ if $z$ can be written
as $z = xy$, where \hbox{$1 \le x \le y \le n$}.  
Let $S$ be the number of successes after $T$ trials. 
Since we are performing a sequence of $T$
Bernoulli trials with probability of success
$p = M(n)/n^2$, the expected number of successes is 
$\E(S) = pT$, and the variance is 
$\V(S) = \E((S-pT)^2) =  p(1-p)T$.
Thus, an unbiased estimate of $M(n)/n^2$ is given by $\widehat{p} = S/T$,
and the variance of this estimate is $p(1-p)/T$.
For large $T$ the error $M(n)/n^2 - S/T$ is asymptotically
normally distributed. 
By the ``law of the iterated logarithm''~\cite{Khinchin24}, 
this error is almost surely $O((T^{-1}\log\log T)^{1/2})$ as
$T \to \infty$.

\begin{remark}					\label{rem:Bessel}
In a practical computation, $p$ is unknown, but 
an unbiased estimate of the variance of the error is
\hbox{$\widehat{p}(1-\widehat{p})/(T-1)$}, 
where the denominator $T-1$ takes into account 
the loss of one degree of freedom in using the
sample mean $\widehat{p}$ instead of the population 
mean $p$. This is known as \emph{Bessel's correction},
and was used by Gauss~\cite{Gauss-Bessel} as early as 1823.

\end{remark}

\subsection{The Product Algorithm}		\label{subsec:product}

In this algorithm, each trial takes $z = xy$, where $x$ and $y$ are
independently and uniformly distributed integers in $[1,n]$. Thus,
$z$ is guaranteed to appear in the $n \times n$ multiplication table.
Let $\nu = \nu(z) \ge 1$ denote the number of times that $z$ appears 
in the table.
The probability that a trial samples $z$ is $\nu(z)/n^2$.
Thus, $\E(1/\nu) = M(n)/n^2 = p$ (where $p$ is as in the Bernoulli algorithm).
Consider a sequence of $T$ independent trials, giving values
$\nu=\nu_1, \ldots, \nu_T$.
An unbiased estimate of $M(n)/n^2$ is given by
$E := T^{-1}\sum_{1\le j \le T}  1/\nu_j$,
and the variance of this estimate is
$V := T^{-1}\E((\nu^{-1} - p)^2)$.
Lemma~\ref{lemma:compare_variance} shows that, for the same values of $T$
and $n$, the variance in the estimate of $M(n)/n^2$ given by the product
algorithm is no larger than that given by the Bernoulli algorithm.
\begin{Lem}	\label{lemma:compare_variance}
If $V$ is the variance of the estimate $E$ after $T$ trials of the
product algorithm, then $V \le p(1-p)/T$.
\end{Lem}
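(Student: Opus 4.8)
The plan is to reduce the claimed inequality to a pointwise comparison between $\nu^{-1}$ and $\nu^{-2}$. First I would note that, since the description of the product algorithm already establishes $\E(\nu^{-1}) = p$, the quantity $V = T^{-1}\E((\nu^{-1}-p)^2)$ is precisely $T^{-1}$ times the variance of the random variable $\nu^{-1}$. Hence it suffices to prove the single-trial bound $\E((\nu^{-1}-p)^2) \le p(1-p)$. Expanding the square and using $\E(\nu^{-1}) = p$ gives $\E((\nu^{-1}-p)^2) = \E(\nu^{-2}) - p^2$, so after cancelling the $p^2$ terms the target inequality is equivalent to $\E(\nu^{-2}) \le p = \E(\nu^{-1})$.

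The key step is then the elementary observation that $\nu = \nu(z) \ge 1$ for every sampled product $z$ (each $z$ appears at least once in the multiplication table, by construction of the product algorithm). Consequently $\nu^{-2} \le \nu^{-1}$ holds pointwise, and taking expectations yields $\E(\nu^{-2}) \le \E(\nu^{-1}) = p$. Chaining the steps back gives $\E((\nu^{-1}-p)^2) = \E(\nu^{-2}) - p^2 \le p - p^2 = p(1-p)$, and dividing by $T$ produces $V \le p(1-p)/T$, which is exactly the assertion.

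I do not expect any genuine obstacle: the whole argument rests on the monotone bound $\nu^{-2}\le\nu^{-1}$ for $\nu\ge 1$, and the only care needed is in the bookkeeping when expanding the variance and in invoking the previously established identity $\E(\nu^{-1})=p$. It is worth remarking that the pointwise inequality is sharp exactly when $\nu = 1$, so equality $V = p(1-p)/T$ would force $\nu \equiv 1$, i.e.\ every sampled product being representable in only one way; this explains why the product algorithm is strictly more accurate (in variance) than the Bernoulli algorithm whenever some entry of the table has more than one representation.
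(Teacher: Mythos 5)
Your proof is correct and follows essentially the same route as the paper's: expand $V = T^{-1}(\E(\nu^{-2}) - p^2)$ using $\E(\nu^{-1}) = p$, then apply the pointwise bound $\nu^{-2} \le \nu^{-1}$ valid because $\nu$ is a positive integer. Your closing remark on the equality case ($\nu \equiv 1$ almost surely) even matches the paper's subsequent observation that equality holds only when $n = 1$.
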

\begin{proof}
Using $p = \E(\nu^{-1})$, we have
\[V = T^{-1}\E((\nu^{-1}-p)^2)
    = T^{-1}(\E(\nu^{-2}) - p^2).
\]
Since $\nu$ is a positive integer,
$\nu^{-2} \le \nu^{-1}$, and
$\E(\nu^{-2}) \le \E(\nu^{-1}) = p$.  It follows that
$V \le T^{-1}(p - p^2)$, as desired.
\end{proof}
\begin{remark}
It is easy to see that equality holds in
Lemma~$\ref{lemma:compare_variance}$
only in the trivial case $n=1$. From Ford's result~\eqref{eq:FordM}, we have
$TV = O(1/\Phi(n))$ as $n \to \infty$.

\end{remark}

An unbiased estimate of the variance of the error for the product algorithm
in terms of computed quantities is 
$\sum_{1\le j \le T} (\nu_j^{-1}-E)^2/(T(T-1))$, 
see Remark~\ref{rem:Bessel}.

\subsection{Avoiding Factorization via Bach/Kalai}	\label{subsec:Bach}

For the Bernoulli algorithm, we have to determine if an integer $z \in [1,n^2]$
occurs in the $n \times n$ multiplication table. Equivalently, we have to
check if $z$ has a divisor $d$ satisfying $z/n \le d \le n$.
A straightforward algorithm for this would first find the prime power
factorization of~$z$, then attempt to construct a divisor $d$ in the
interval $[z/n, n]$, using products of the prime factors of
$z$.

Similarly, for the product algorithm, we have to count the number of
divisors $d$ of $xy$ in the interval $[xy/n, n]$.
A straightforward algorithm for this would first find the prime power
factorizations of $x$ and $y$.

To avoid having to factor the random integers $z$ (or $x$ and $y$) occurring
in the Bernoulli (or product) algorithms, we can generate random integers
\emph{along with their prime power factorizations}, using the algorithms
of Bach~\cite{bach_factor} or Kalai~\cite{kalai_factor}.
This is much more efficient, on average, than generating random integers and
then attempting to factor them, since the integer factorization problem is
not known to be solvable in polynomial time and is time consuming in
practice for many inputs.

The algorithm described by Bach, specifically
his ``Process R'', returns an integer
$x$ uniformly distributed in the interval $(N/2,N]$,
together with the prime power factorization of $x$. Using Bach's algorithm,
which we call ``procedure $R$'', it is easy to give a recursive
procedure $B$ which returns $x$ uniformly distributed in the
interval $[1,N]$, together with the prime power factorization of $x$.
For details see Algorithm~$\bachmod$. 
The following comments on the complexity of Bach's algorithm also apply to
procedure~$B$.

\SetAlgoRefName{\bachmod}
\begin{algorithm}
\caption{Modification of Bach's algorithm}
{\bf procedure} {$R(N)$}
\BlankLine
\SetKwInOut{Input}{Input} \SetKwInOut{Output}{Output}
\Input{A positive integer $N$}
\Output{A random integer $x\in (N/2,N]$ and its prime power factorization}
\BlankLine
Details omitted: see Bach~\cite[``Process R'', pg.~184]{bach_factor}\\
{\bf end procedure} $R$
\BlankLine
{\bf procedure} {$B(N)$}
\BlankLine
\Input{A positive integer $N$}
\Output{A random integer $x\in [1,N]$ and its prime power factorization}
\BlankLine
\If{ $N == 1$ }{ \Return $1$ }
generate random real $u$ uniformly distributed in $[0,1)$\\
\If{ $u < \lfloor N/2 \rfloor/N$ }{ \Return $B(\lfloor N/2 \rfloor)$ }
\Else{ \Return $R(N)$ }
{\bf end procedure} $B$
\end{algorithm}

The expected running time of Bach's algorithm is dominated by the time for
primality tests.\footnote{More precisely, Bach's algorithm requires prime
power tests, but it is relatively easy to check if an integer is a perfect
power (see Bernstein~\cite{Bernstein-powers}), so primality tests and prime
power tests have (on average) almost the same complexity.  Also, it is
possible to modify Bach's algorithm so that only primality (not prime power)
tests are required. Thus, we ignore the distinction between primality tests
and prime power tests.} Bach's algorithm requires, on average, $O(\log N)$
primality tests. The AKS deterministic primality test~\cite{AKS,LP} requires
$(\log N)^{O(1)}$ bit-operations, so overall Bach's algorithm has
average-time complexity $(\log N)^{O(1)}$. In our implementation, we
replaced the AKS primality test by the Miller--Rabin Monte Carlo
test~\cite{Burthe,LP2,Miller76,Rabin80}, which is much faster,
at the cost of a small probability of error.%
\footnote{The probability of
error can be reduced to $\le 4^{-k}$ by repeating the test $k$
times with independent random inputs,
see \cite{Rabin80}.} 
A small probability of an error (falsely 
claiming that a composite integer is prime) is acceptable when the overall
computation is a Monte Carlo estimation. Such errors will have a negligible
effect on the final result, assuming that the number of trials is large.

Kalai~\cite{kalai_factor} gave an algorithm with the same inputs and outputs
as our modification (procedure~$B$) of Bach's algorithm, but much simpler and
easier to implement. The disadvantage of Kalai's algorithm is that it
is asymptotically slower than Bach's, by a factor of order $\log N$.
More precisely, Kalai's algorithm requires, on average, of order
$(\log N)^2$ primality tests, whereas procedure~$B$ requires of
order $\log N$ prime power tests.  
We implemented both algorithms using Magma~\cite{Magma},
and found that, as expected, Kalai's algorithm was slower than procedure~$B$ 
for $N$ sufficiently large. With our implementations\footnote{Further
details concerning our implementations, and approximations/optimizations
valid for very large $N$, may be found 
in~\cite{multiplication-HK, multiplication-CARMA}.}, 
the crossover
point was  $N \approx 2^{45}$. For $N = 2^{100}$, procedure~$B$ was
faster by a factor of about $2.2$. 

\section{Implementations and Results}	\label{sec:numerical}

We used several independent implementations of Algorithm $\algMn$
(with segmentation), and
three independent implementations of
Algorithm $\algdeltanT$ in three different languages: C, C++, and Sage.%
\footnote{We thank Paul Zimmerman for verifying some of our results
using Sage.}
The published exact computations in \cite{b_k} are of the form
$M(2^n -1)$ for $1 \leq n \leq 17$.  In Table~\ref{tab:BKext}, we include
$18 \leq n \leq 30$.  
The entries in Table~\ref{tab:BKext} were computed independently
using both Algorithm~$\algMn$ and Algorithm~$\algdeltanT$.
No discrepancies were found.\footnote{The entries given
in OEIS A027417 
differ by one because they include the zero product.}
Timing comparisons are difficult as different (time-shared) computer
systems were used, but we estimate that
Algorithm ${\algdeltanT}$ was about three times
faster than Algorithm $\algMn$ for $n = 30$.

\begin{table}[ht]
\begin{center}
\begin{tabular}{ | r | r | r | r | } \hline
  $k$ & $M(2^k - 1)$  & $k$ & $M(2^k - 1)$ \\ \hline
18 & 14081089287 & 25 & 209962593513291\\
19 & 55439171530 &   26 &  830751566970326 \\ 
20 & 218457593222 &   27 & 3288580294256952 \\ 
21& 861617935050  & 28 & 13023772682665848 \\ 
22 &3400917861267 &   29 & 51598848881797343 \\ 
23 &13433148229638 &    30 & 204505763483830092 \\
24 &53092686926154 & &\\ \hline
\end{tabular}
\end{center}
\caption{Extension of the Brent-Kung computation}
\vspace*{-10pt}
\label{tab:BKext}
\end{table}

A table of $M(k\cdot 2^{10})$ for $1 \leq k \leq 2^{20}$ was computed by the
third and fourth authors \cite{m_n_table}.  The computation used a wheel
modulus approach as described \S$\ref{modcomp}$ with $w = 60$.  The
computation took about 7 weeks on Butler University's BigDawg cluster which
has 32 Intel Xeon E5-2630 processors (a total of 192 cores). 
Table~\ref{tab:runtimes} shows the time (in seconds) to compute $\delta(n)$
for all $n \in ( 10^8, 10^8 + 10^3]$ on an Intel i7-4700 with 16GB RAM,
using various values of the modulus~$w$. It can be seen that using larger
moduli provides a significant speedup (at the cost of increased program
complexity).

\begin{table}[ht]
\begin{center}
\begin{tabular}{ | c | r |  } \hline
Algorithm  & time (s)  \\ \hline
Algorithm $\algdeltan$ & 909 \\
$\pmod{1}$ & 302 \\ 
$\pmod{2}$ & 184\\ 
$\pmod{6}$ & 106\\
$\pmod{12}$ & 85 \\
$\pmod{60}$ & 59 \\ \hline
\end{tabular}
\end{center}
\caption{Runtime comparison}
\vspace*{-10pt}
\label{tab:runtimes}
\end{table}

Algorithm $\algMn$, implemented in C, ran on the ARCS computer
system at the University of Newcastle, Australia. The computer nodes used
were a mixture of 2.2 GHz Intel Xeon 3 and 2.6GHz Intel Xeon 4. 

\smallskip

We now consider Monte Carlo algorithms for approximating $M(N)$,
where $N := 2^n-1$.
First consider the case $n=30$, $N=2^{30}-1$, 
for which we know the exact value $M(N) = 204505763483830092$ 
from our deterministic computations.
Taking $T = 10^6$ trials of the ``product'' Monte Carlo algorithm,
we estimate
$M(N)/N^2 = 0.17750$, 
whereas the correct value to 5 decimals (5D) is
$M(N)/N^2 = 0.17738$. 
The variance estimate here is $V = 2.873\times 10^{-8}$,
so $\sigma := V^{1/2} \approx 0.00017$. 
Thus, the Monte Carlo estimate is as accurate as predicted
from the standard deviation $\sigma$.
The same number of trials with the Bernoulli algorithm gives
variance $1.459 \times 10^{-7}$, larger by a factor of about five.
Thus, the product algorithm is more efficient (other things being
equal), as predicted by
Lemma~\ref{lemma:compare_variance}.
In practice the comparison is not so straightforward, because the product
algorithm requires checking more divisors (on average) than the Bernoulli
algorithm, and has a larger space requirement.

The results of some Monte Carlo computations are given to 4D in
Table~\ref{tab:MC}. 
For $n > 10^4$ we used an approximation described
in~\cite{multiplication-HK} to avoid dealing with $n$-bit integers
(essentially by using a logarithmic representation).
We used
the product algorithm (mainly for $n < 10^6$)
and the Bernoulli algorithm (mainly for $n \ge 10^6$),
combined with
Bach's algorithm (described in Section~\ref{approx_eval}).
The Bernoulli algorithm was preferred for $n \ge 10^6$ because of its
smaller space requirements. 
Kalai's algorithm was used for confirmation
(mainly for $n \le 100$).  
 
The second column of Table~\ref{tab:MC} gives an estimate of $M(N)/N^2$,
and the last column gives the normalized value
$(N^2/M(N))/\Phi(N)$. By Ford's result~\eqref{eq:FordM}, this should
be bounded away from $0$ and $\infty$ as $n \to \infty$. 
The third column gives $10^4\sigma$, where $\sigma^2$ is an 
estimate of the variance of the corresponding entry in the second column.
Because of the factor $10^4$, this 
corresponds to units in the last place (ulps) for the second column.
Since the entries
in the third column are bounded by $0.12$,
the entries in the second column are unlikely to be in error by
more than $0.7$ ulp.
Similarly, the entries in the last column of the table
are unlikely to be in error by more than 1~ulp.\footnote{%
Table~\ref{tab:MC} is extended to $n = 5\times 10^8$ (but with lower
accuracy) in~\cite{multiplication-HK,multiplication-CARMA}.}
The entries for $n \le 30$ may be verified (up to the predicted accuracy)
using the exact results of Table~\ref{tab:BKext}.

\begin{table}[ht]
\begin{center}
\begin{tabular}{ | c | c | c| c |c | } \hline
  $n$ & $M(N)/N^2$  		& $10^4 \sigma$	& \underbar{trials}
      & \underbar{$N^2/M(N)$} \\ 
      & ($N = 2^n-1$)	&& $10^8$ & $\Phi(N)$ \\ \hline
20 & 0.1987     & 0.12 & $2$ & 0.9414\\ 
30 & 0.1774    	& 0.02 & $100$ & 0.8213\\
40 & 0.1644	& 0.02 & $100$ & 0.7549\\
50 & 0.1552	& 0.02 & $100$ & 0.7112\\
$10^2$ & 0.1311 & 0.02 & $100$ & 0.6068\\
$10^3$ & 0.0798	& 0.02 & $100$ & 0.4264\\
$10^4$ & 0.0517	& 0.01 & $100$ & 0.3435\\
$10^5$ & 0.0348	& 0.06 & $2$ & 0.2958\\
$10^6$ & 0.0240	& 0.05 & $10$ & 0.2652\\
$10^7$ & 0.0170 & 0.05 & $6.7$ & 0.2432 \\
$10^8$ & 0.0121 & 0.10 & $1.32$ & 0.227 \\
\hline
\end{tabular}
\end{center}
\caption{Monte Carlo computations}
\label{tab:MC}
\vspace*{-10pt}
\end{table}

It has not been shown that the numbers in the last column of
Table~\ref{tab:MC}
should tend to a limit as $N\to\infty$.  Ford's result (2) shows that
the $\limsup$ and $\liminf$ are finite and positive,
but not that a limit exists.
A non-rigorous extrapolation of our experimental results,
described in more detail
in ~\cite{multiplication-HK, multiplication-CARMA},
suggests that the limit (if it exists) is about $0.12$.
Clearly convergence is very slow. Perhaps this is
to be expected, given that $\Phi(n)$ grows very slowly.

In some similar problems the corresponding limit does not exist.
For example,  
let $S(x)$ be the number of \hbox{$n \leq x$} 
such that the number of divisors of $n$ is at least $\log x$.  
Norton~\cite{norton} showed that there are positive constants $c_1, c_2$ with 
\hbox{$c_1 < R(x) < c_2$} for $x$ sufficiently large,
where $R(x) = S(x)x^{-1}(\log x)^c(\log \log x)^{1/2}$.  
Later, Balazard, \emph{et~al.}~\cite{bnpt} showed that 
$\lim_{x \to \infty} R(x)$ does not exist.
Thus, it would not be too surprising if
$\lim_{N\to\infty} N^2/(M(N)\Phi(N))$ failed to exist.
However, we have not detected any numerical evidence for oscillations in
the last column of Table~\ref{tab:MC}, so we would expect the
$\liminf$ and $\limsup$ to be close, even if unequal.

\pagebreak[3]

\end{document}